\numberwithin{equation}{section}
\newtheorem{thm}{Theorem}[section]
\newtheorem{cor}[thm]{Corollary}
\newtheorem{prop}[thm]{Proposition}
\newtheorem{lem}[thm]{Lemma}
\theoremstyle{definition}
\newtheorem{dfn}[thm]{Definition}
\theoremstyle{remark}
\newtheorem{rem}[thm]{Remark}
\newtheorem{problem}[thm]{Problem}
\newtheorem{ex}[thm]{Example}
\newcommand{\R}{\mathbb{R}}
\newcommand{\Q}{\mathbb{Q}}
\newcommand{\Z}{\mathbb{Z}}
\newcommand{\Int}{\operatorname{Int}}
\newcommand{\id}{\operatorname{id}}
\renewcommand{\tilde}{\widetilde}
\def\spmapright#1{\smash{%
 \mathop{\hbox to 1.3cm{\rightarrowfill}}
  \limits^{#1}}}
\def\spmapleft#1{\smash{%
 \mathop{\hbox to 1.3cm{\leftarrowfill}}
  \limits^{#1}}}
\title[Round fold maps of $n$--dimensional manifolds into $\R^{n-1}$]
{Round fold maps of $n$--dimensional manifolds \\ into $\R^{n-1}$}
\author{Naoki Kitazawa and Osamu Saeki} 
\address{Institute of Mathematics for Industry,
Kyushu University,
Motooka 744, Nishi-ku, Fukuoka 819-0395, Japan}
\email{n-kitazawa@imi.kyushu-u.ac.jp}
\email{saeki@imi.kyushu-u.ac.jp}
\date{\today}
\keywords{round fold map, simple stable map, diffeomorphisms of surfaces, Morse function}
\subjclass[2010]{Primary 57R45;
Secondary 
58K30}
\begin{document}
\begin{abstract}
We determine those smooth $n$--dimensional closed manifolds
with $n \geq 4$
which admit round fold maps into $\R^{n-1}$, i.e.\ fold
maps whose critical value sets consist of disjoint
spheres of dimension $n-2$ isotopic to concentric spheres.
We also classify such round fold maps up to $C^\infty$
$\mathcal{A}$--equivalence.
\end{abstract}

\maketitle

\section{Introduction}\label{section1}

Let $M$ be a smooth closed manifold of dimension
$n \geq 2$. A smooth map $f : M \to \R^p$ with $n \geq p \geq 1$
is called a \emph{fold map} if it has only fold points as its
singularities (for details, see \S\ref{section2}).
Note that fold points are the simplest
singularities among those which appear generically \cite{GG}
and that fold maps are natural generalizations of Morse functions.

In \cite{Sa92, Sa93-2, Sa2, Sa1}, the second author
considered the following smaller class
of generic smooth maps. A fold map $f : M \to \R^p$
is {\em simple} if for every $q \in \R^p$, each component 
of $f^{-1}(q)$ contains at most one singular point.
In particular, if $f|_{S(f)}$ is an embedding,
then $f$ is simple, where $S(f) (\subset M)$ denotes the set of
singular points of $f$. Note that if $f$ is a fold
map, then $S(f)$ is a regular closed submanifold of $M$ of dimension $p-1$
and that $f|_{S(f)}$ is an immersion in general.
In \cite{Sa1},
the second author proved that a closed orientable $3$--dimensional
manifold $M$ admits a fold
map $f : M \to \R^2$ such that $f|_{S(f)}$ is an embedding
if and only if $M$ is a graph manifold, where a closed
orientable $3$--dimensional manifold is a graph manifold
if it is the finite union of $S^1$--bundles over compact surfaces
attached along their torus boundaries.
Thus, for example, if $M$ is hyperbolic, then
$M$ never admits such a fold map, although every closed
orientable $3$--dimensional manifold admits a fold map into $\R^2$
by \cite{L0}.

On the other hand, the first author introduced the notion
of a round fold map \cite{Ki1, Ki2}: a smooth
map $f : M \to \R^p$ is a \emph{round fold map}
if it is a fold map and $f|_{S(f)}$ is an embedding
onto the disjoint union of some concentric $(p-1)$--dimensional spheres
in $\R^p$ (for details,
see \S\ref{section2}). Round fold maps are naturally
simple. As has been studied by the
first author, round fold maps have various nice properties.

The main result of this paper is Theorem~\ref{thm:main}, which
characterizes those smooth closed $n$--dimensional
manifolds that admit round fold maps into $\R^{n-1}$ for
$n \geq 4$.
We also classify such round fold maps up to $C^\infty$ 
$\mathcal{A}$--equivalence (see Theorems~\ref{thm:classification}
and \ref{thm:classification2}).

The paper is organized as follows. In \S\ref{section2},
we prepare several definitions concerning
round fold maps together with some examples,
and state
our main theorem. 
In \S\ref{section3}, we prove the
main theorem mentioned above. The main ingredients
are the celebrated results about the homotopy
groups of diffeomorphism groups of compact surfaces 
\cite{EE1, EE2, ES}. We also give
a similar characterization of those smooth
orientable closed connected $n$--dimensional manifolds
that admit directed round fold maps into $\R^{n-1}$,
where a fold map is \emph{directed} if the number of regular fiber
components increases toward the central region of $\R^{n-1}$
(for details, see \S\ref{section3}).
In \S\ref{section4}, we give some related results
and remarks. Finally in \S\ref{section5}, we classify
the round fold maps as in Theorem~\ref{thm:main}
up to $C^\infty$ $\mathcal{A}$--equivalence using
Morse functions on compact surfaces.
The key idea for the classification 
is to use some results about the homotopy
type of the group of diffeomorphisms of compact
surfaces that preserve a given Morse function,
due to Maksymenko \cite{Mak1, Mak2}.

Throughout the paper, 
all manifolds and maps between them are smooth
of class $C^\infty$ unless otherwise specified. 
For a space $X$, 
$\id_X$ denotes the identity map of $X$. 
The symbol ``$\cong$'' denotes a diffeomorphism between
smooth manifolds.

\section{Round fold maps}\label{section2}

Let $M$ be a closed $n$--dimensional manifold and
$f : M \to \R^p$ a smooth map, where we assume
$n \geq p \geq 2$. By the \emph{codimension} of
$f$ we mean the integer $p-n \leq 0$.

\begin{dfn}
A point $q \in M$ is a \emph{singular point} of $f$
if the rank of the differential $df_q : T_qM \to T_{f(q)}\R^p$
is strictly smaller than $p$. We denote by $S(f)$ the set
of all singular points of $f$.
A point $q \in S(f)$ is a \emph{fold point}
if $f$ is represented by the map
$$(x_1, x_2, \ldots, x_n) \mapsto (x_1, x_2, \ldots, x_{p-1},
\pm x_p^2 \pm x_{p+1}^2 \pm \cdots \pm x_n^2)$$
around the origin with respect to certain local coordinates around $q$
and $f(q)$. Let $\lambda$ be the number of negative signs
appearing in the above expression. The integer
$$\max\{\lambda, n-p+1 - \lambda\} \in \{\lceil (n-p+1)/2 \rceil,
\lceil (n-p+1)/2 \rceil+1, \ldots, n-p+1\}$$
is called the \emph{absolute index} of the fold point $q$,
which is known to be well-defined, where for $x \in \R$,
$\lceil x \rceil$ denotes the minimum integer greater than or equal to $x$.
We call a point $q \in S(f)$ a \emph{definite fold point}
if its absolute index is equal to $n-p+1$, otherwise an
\emph{indefinite fold point}.

A smooth map $f : M \to \R^p$ is called a \emph{fold map}
if it has only fold points as its singular points.
Note that then $S(f)$ is a closed $(p-1)$--dimensional submanifold of $M$
and that $f|_{S(f)}$ is an immersion.
\end{dfn}

Note that if $p=n-1$, then the absolute index of a fold
point is equal either to $1$ or to $2$.

\begin{dfn}
Let $C$ be a finite disjoint union of embedded $(p-1)$--dimensional
spheres in $\R^p$, $p \geq 2$.
We say that $C$ is \emph{concentric} if each component
bounds a $p$--dimensional disk in $\R^p$ and
for every pair $c_0, c_1$
of distinct components of $C$, exactly one of them, say $c_i$, is contained
in the bounded region of $\R^p \setminus c_{1-i}$ (see Fig.~\ref{fig1}
for $p = 2$).
(In this case, we say that $c_i$ (or $c_{1-i}$)
is an \emph{inner component} (resp.\ an \emph{outer component})
with respect to $c_{1-i}$ (resp.\ $c_i$).)
In other words, $C$ is isotopic to a set of concentric $(p-1)$--dimensional
spheres in $\R^p$. (Note that the condition that each component
of $C$ bounds a $p$--dimensional disk is redundant for $p \neq 4$.)
\end{dfn}

\begin{dfn}
We say that a smooth map $f : M \to \R^p$ of a closed $n$--dimensional
manifold $M$ into the $p$--dimensional Euclidean space 
is a \emph{round fold map}
if it is a fold map and $f|_{S(f)}$ is an embedding onto a concentric
family of embedded spheres.
Note that a round fold map is a simple stable map in the sense of
\cite{Sa92, Sa93-2, Sa2, Sa1}. Note also that the outermost component
of $f(S(f))$ consists of the images of definite fold points.
\end{dfn}

\begin{figure}[h]
\centering
\psfrag{C}{$C$}
\psfrag{ci}{$c_i$}
\psfrag{c1}{$c_{1-i}$}
\includegraphics[width=0.9\linewidth,height=0.3\textheight,
keepaspectratio]{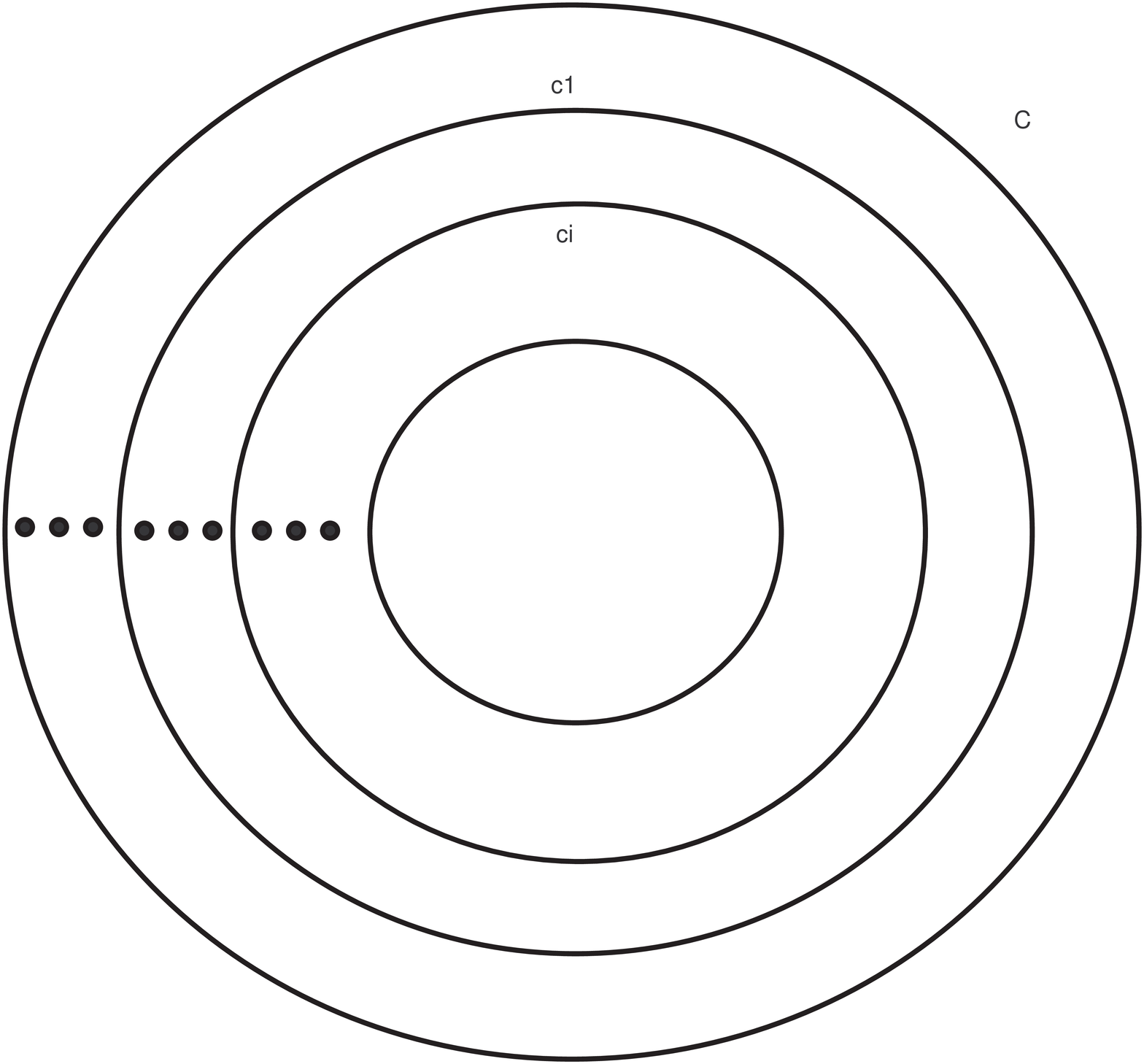}
\caption{Family of concentric $1$--dimensional spheres in $\R^2$}
\label{fig1}
\end{figure}

\begin{ex}\label{ex1}
Let $F$ be a compact connected $m$--dimensional manifold
possibly with boundary and
$h : F \to [1/2, \infty)$ a Morse function
such that $h(\partial F) = 1/2$ and that $h$ has no
critical point near the boundary.
(Throughout the paper, a Morse function is a smooth
function whose critical points are all non-degenerate and
have distinct critical values.)
Then for $p \geq 2$ we can construct a round fold map
$f : M \to \R^p$ in such a way that $M$ is the closed
$(m+p-1)$--dimensional manifold
$(\partial F \times D^p) \cup (F \times \partial D^p)
= \partial (F \times D^p)$,
that $f$ restricted to $F \times \{x\}$ can be identified with the
Morse function $h$ to the half line
emanating from the origin and passing through $x$ for
each $x \in \partial D^p$, and that $f$ restricted to
$\partial F \times D^p$ is the projection to the second factor
multiplied by $1/2$,
where $D^p$ is the unit disk in $\R^p$ centered at the origin.
Such a manifold has a natural open book structure
with binding $\partial F \times \{0\}$, and the
resulting round fold map is said to be associated with the
open book structure and the Morse function $h$.
\end{ex}

Let $S^{n-1}$ be the unit sphere centered
at the origin in $\R^n$ and $\gamma : S^{n-1} \to S^{n-1}$
the orientation reversing diffeomorphism
defined by
$$\gamma(x_1, x_2, \ldots, x_n) = (-x_1, x_2, \ldots, x_n)$$
for $(x_1, x_2, \ldots, x_n) \in S^{n-1}$.
In the following, we denote the total space of the non-orientable 
$S^{n-1}$--bundle over $S^1$ with monodromy $\gamma$, 
$$[0, 1] \times S^{n-1}/(1, q) \sim (0, \gamma(q)),$$
by $S^1 \tilde{\times} S^{n-1}$.

Furthermore, we denote by $S^2 \tilde{\times} S^2$
the total space of the unique non-trivial $S^2$--bundle over $S^2$.

The main theorem of this paper is the following.

\begin{thm}\label{thm:main}
A closed connected $n$--dimensional manifold
with $n \geq 4$ admits a round fold map into
$\R^{n-1}$ if and only if it is diffeomorphic to
one of the following manifolds:
\begin{enumerate}
\item standard $n$--dimensional sphere $S^n$,
\item a connected sum of finite numbers of copies
of $S^1 \times S^{n-1}$ or $S^1 \tilde{\times} S^{n-1}$,
\item $S^{n-2} \times \Sigma$ for a closed connected
surface $\Sigma$,
\item $S^2 \tilde{\times} S^2$ for $n=4$.
\end{enumerate}
\end{thm}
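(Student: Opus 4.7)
The plan is to prove the two directions of Theorem~\ref{thm:main} separately.

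For the ``if'' direction, I would realize each of the listed manifolds as the target of a round fold map. Cases (1)--(3) can all be produced by Example~\ref{ex1} with suitable choices of surface $F$ and Morse function $h$: take $F = D^2$ with a single interior critical point to obtain $\partial(D^2 \times D^{n-1}) \cong S^n$; take $F$ a compact (orientable or non-orientable) surface with one boundary circle and a Morse function with one minimum and an appropriate number of index-$1$ critical points, so that $F \times D^{n-1}$ is diffeomorphic to $D^{n+1}$ with $1$-handles attached and its boundary is a connected sum of copies of $S^1 \times S^{n-1}$ and $S^1 \tilde{\times} S^{n-1}$ (case (2)); take $F = \Sigma$ a closed surface with any Morse function, so that $\partial(F \times D^{n-1}) = \Sigma \times S^{n-2}$ gives case (3). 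For case (4), an appropriate modification of Example~\ref{ex1} using the non-trivial $S^2$-bundle over $S^2$ (whose monodromy represents the non-trivial element of $\pi_1(SO(3)) = \Z/2$) produces a round fold map on $S^2 \tilde{\times} S^2$; this is only available because $n-2 = 2$ when $n = 4$.

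For the ``only if'' direction, suppose $f: M \to \R^{n-1}$ is a round fold map. After an ambient diffeomorphism of $\R^{n-1}$, one may assume the concentric singular spheres are the round spheres of radii $r_1 < \cdots < r_k$ and that $f(M)$ equals the closed disk bounded by the outermost sphere (whose preimage consists of definite fold points). Parametrizing $\R^{n-1} \setminus \{0\} \cong (0,\infty) \times S^{n-2}$ by polar coordinates and picking a generic radial ray $\ell$, the preimage $F := f^{-1}(\ell \cap f(M))$ is a compact $2$-dimensional submanifold on which $h := f|_F$ is a Morse function with critical values $r_1, \ldots, r_k$. The rotational symmetry of the concentric singular structure then exhibits
\[
f^{-1}\bigl(\{x \in \R^{n-1} : \epsilon \leq |x| \leq r_k\}\bigr) \longrightarrow S^{n-2}
\]
as a locally trivial $F$-bundle, classified by an element of $\pi_{n-3}(\mathrm{Diff}(F))$ (with a suitable boundary condition).

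The key computation is to evaluate this homotopy group. By the theorems of Earle--Eells and Earle--Schatz~\cite{EE1, EE2, ES}, the higher homotopy $\pi_i(\mathrm{Diff}(F))$ vanishes for $i \geq 2$ and for most $F$ also for $i = 1$. Therefore for $n \geq 5$ the fibration is trivial, so $M$ is the union of $F \times S^{n-2}$ and a trivial bundle $D^{n-1} \times F_0$ over a small central ball (with $F_0 \cong \partial F$, possibly empty). If $\partial F = \emptyset$ this yields case (3); if $F = D^2$ then $M \cong S^n$, giving (1); otherwise $F$ is built from a disk by attaching $1$-handles, and $M \cong \partial(F \times D^{n-1})$ is $\partial D^{n+1}$ with $1$-surgeries, producing the connected sums of case (2), the orientation character of each handle determining whether one obtains an $S^1 \times S^{n-1}$ or an $S^1 \tilde{\times} S^{n-1}$ summand. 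For $n = 4$, the possibility $F = S^2$ admits non-trivial monodromy in $\pi_1(\mathrm{Diff}(S^2)) = \Z/2$, which recovers case (4).

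The main obstacle I foresee is the precise identification of the resulting closed manifold after gluing the surface bundle to the central trivial piece, together with the careful matching of orientation data between the surface $F$ and the summands in case (2). A secondary delicate point is the $n = 4$ case, where one must rule out further possibilities (for instance, a twisted bundle when $F$ is a torus or has boundary) and verify that the only additional manifold obtained is $S^2 \tilde{\times} S^2$.
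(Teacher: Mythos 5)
Your overall strategy coincides with the paper's: extract the open book structure with page $F$ and page Morse function $h$, show that the $F$--bundle $\rho$ over $S^{n-2}$ is trivial by a homotopy-group computation for surface diffeomorphism groups, and identify $M$ with $\partial(F\times D^{n-1})$; your ``if'' direction also matches (your twisted-bundle construction for $S^2\tilde{\times}S^2$ is a legitimate alternative to the paper's appeal to a special generic map). However, there is a genuine gap at the key step of the ``only if'' direction: the assertion that $\pi_i(\mathrm{Diff}(F))$ vanishes for all $i\geq 2$ is \emph{false} for $F=S^2$. Earle--Eells gives $\mathrm{Diff}(S^2)\simeq O(3)$, so for instance $\pi_3(\mathrm{Diff}(S^2))\cong\pi_3(SO(3))\cong\Z$, and there do exist non-trivial $S^2$--bundles over $S^{n-2}$ for $n\geq 6$. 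The case $F=S^2$, $K=\emptyset$ genuinely occurs (this is the case that is supposed to produce $M\cong S^2\times S^{n-2}$), so your argument fails to show triviality of $\rho$ exactly where it is needed. To close the gap one must use the extra structure carried by $\rho$: either reduce the structure group to the diffeomorphisms of $F$ preserving the page Morse function $h$ (an aspherical group, by Maksymenko --- but that is not Earle--Eells), or do what the paper does, namely cut $M$ into the annular pieces $M_r=f^{-1}(C_{[r-(1/2),\,r+(1/2)]})$, each of which is a bundle whose fiber is $D^2$, $P$ or $B$ together with annuli --- surfaces whose diffeomorphism groups genuinely have vanishing $\pi_i$ for $i\geq 2$ --- and then reassemble using $\pi_{n-2}$ of $\mathrm{Diff}(S^1)$ for the circle-fibered gluings. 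The sphere never appears as a fiber of a single piece, which is precisely why the paper's decomposition works.

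A second, smaller gap is the $n=4$ case, which you flag but do not resolve. When $F$ is closed and not $S^2$, or when $\partial F\neq\emptyset$, one must rule out non-trivial clutching in $\pi_1(\mathrm{Diff}_0(F))$, which is non-zero for the torus, the disk and the annulus. The paper's piece decomposition handles this: any page other than $S^2$ forces a $P$-- or $B$--piece, whose bundles over $S^2$ are trivial, and the boundary circle bundles of non-trivial $D^2$-- or $A$--bundles over $S^2$ are non-trivial, so such pieces cannot glue onto the trivial adjacent ones; when $\partial F\neq\emptyset$, the triviality of $N(K)\to D$ forces the boundary $\partial F$--bundle to be trivial, and Earle--Schatz then trivializes the $F$--bundle. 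Without an argument of this kind your proof does not exclude, say, a twisted $T^2$--bundle over $S^2$ from the list.
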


\section{Proof of Theorem~\ref{thm:main}}\label{section3}

In this section, we prove Theorem~\ref{thm:main}.

\begin{proof}[Proof of Theorem~\textup{\ref{thm:main}}]
Let $f : M \to \R^{n-1}$ be a round fold map of
a closed connected $n$--dimensional manifold with
$n \geq 4$.
In the following, for $r > 0$, $C_r$ denotes 
the $(n-2)$--dimensional
sphere of radius $r$ centered at the origin in $\R^{n-1}$.
We may assume that 
$$f(S(f)) = \bigcup_{r=1}^s C_r$$
for some $s \geq 1$ by composing a diffeomorphism of $\R^{n-1}$
if necessary.
Set $K = f^{-1}(0)$, which is a closed submanifold
of dimension $1$ of $M$ if it is not empty.
Let $D$ be the closed $(n-1)$--dimensional disk centered 
at the origin with radius $1/2$. 
Then, $f^{-1}(D)$ is diffeomorphic to
$K \times D$, which can be identified with
a tubular neighborhood $N(K)$ of $K$ in $M$. Furthermore, the composition
$\rho = \pi \circ f : M \setminus \Int{N(K)} \to S^{n-2}$
is a submersion, where $\pi : \R^{n-1} \setminus \Int{D}
\to S^{n-2}$ is the standard radial projection and
$\rho|_{\partial N(K)} : \partial N(K) = K \times \partial D
\to S^{n-2}$ corresponds to the projection to the second factor
followed by a scalar multiplication. Hence, $\rho$
is a smooth fiber bundle.  
In other words, $M$ admits an open book structure
with binding $K$.
The fiber (or the page) is identified with $F = f^{-1}(J)$, where
$$J = [1/2, \infty) \times \{0\} \subset \R \times \R^{n-2} = \R^{n-1},$$
and it is a compact surface possibly with boundary.
As we are assuming that $M$ is connected and $n \geq 4$, so is $F$.
Note that $h = f|_{F} : F \to J$ is
a Morse function with exactly $s$ critical points.

Note that all these arguments work even when
$K = \emptyset$. In this case, $F$ is a closed connected
surface and $M$ is the total
space of an $F$--bundle over $S^{n-2}$.


In the following, $A$ denotes the annulus $S^1 \times
[-1, 1]$, and $P$ denotes the compact surface obtained from
the $2$--sphere by removing three open disks: 
in other words, $P$ is a \emph{pair of pants}.
Furthermore, $B$ denotes the compact surface
obtained from the M\"obius band with an open
disk removed.

Set $M_r = f^{-1}(C_{[r-(1/2), r+(1/2)]})$, $r = 1, 2, \ldots, s$,
and $M_0 = f^{-1}(C_{[0, 1/2]})$, where 
$$C_{[a, b]} = \{(x_1, x_2, \ldots, x_{n-1})
\in \R^{n-1}\,|\, a \leq \sqrt{x_1^2 + x_2^2 + \cdots + x_{n-1}^2} \leq b\}$$
for $0 \leq a < b$.
Note that for $r \geq 1$, 
the map $\rho|_{M_r} : M_r \to S^{n-2}$
is a smooth fiber bundle and the fiber is diffeomorphic
to $D^2$, $P$ or $B$ together with a finite number of
copies of $A$ (see \cite{Sa04}, for example).

Now, suppose that $n \geq 5$. Then
by \cite{EE1, EE2, ES}, the identity component of the
group of diffeomorphisms
of $D^2$, $P$, $B$ and $A$ all have vanishing homotopy
groups of dimension $n-3$. Therefore, the above
bundles are all trivial.
Furthermore, for obtaining $M \setminus \Int{N(K)}$,
we need to glue the pieces by bundle maps with
fiber a disjoint union of circles. Again, as the 
identity component of the group of diffeomorphisms of $S^1$
has vanishing homotopy group of dimension $n-2$, we see
that the fiber bundle
$\rho : M \setminus \Int{N(K)} \to S^{n-2}$
is trivial. Moreover, the diffeomorphism used
for attaching $N(K)$ to $M \setminus \Int{N(K)}$
is again standard. As a result we see that
$M$ is diffeomorphic to the union
$$(K \times D^{n-1}) \cup_\partial (F \times S^{n-2})
= (\partial F \times D^{n-1}) \cup_\partial
(F \times \partial D^{n-1}),$$
where the attaching diffeomorphism is the standard one.
This implies that $M$ is diffeomorphic to
$\partial (F \times D^{n-1})$.

If $F$ has no boundary, then $M$ is diffeomorphic to
$F \times S^{n-2}$, where $F$ is a closed connected surface.
If $F$ has non-empty boundary, then $F \times D^{n-1}$
is diffeomorphic to an $(n+1)$--dimensional
manifold obtained by attaching some $1$--handles to $D^{n+1}$.
Therefore, its boundary is diffeomorphic to the
connected sum of finite numbers of copies of $S^1
\times S^{n-1}$ or $S^1 \tilde{\times} S^{n-1}$.

Now suppose that $n=4$. In this case,
$P$--bundles and $B$--bundles over $S^2$ are all
trivial, while $D^2$--bundles and $A$--bundles
may possibly be non-trivial.
If $F$ is a closed connected surface, then
$M$ is diffeomorphic to the total space
of an $F$--bundle over $S^2$. 
If $F$ is diffeomorphic to $S^2$, then
we see that $M$ is diffeomorphic either to
$S^2 \times S^2$ or to $S^2 \tilde{\times} S^2$.
If $F$ is not diffeomorphic to $S^2$, then
a $P$--bundle piece or a $B$--bundle piece necessarily appears,
and such a bundle must be trivial.
Since the boundary $S^1$--bundle (or $S^1$--bundles)
of an arbitrary non-trivial $D^2$--bundle (resp.\ $A$--bundle)
over $S^2$ is (resp.\ are) always non-trivial, no such non-trivial
bundle appears. This implies that the $F$--bundle
over $S^2$ must be trivial.
Therefore, $M$
is diffeomorphic to $S^2 \times \Sigma$ for a closed
connected surface $\Sigma$.

If $F$ has non-empty boundary, then as $N(K)$
is a trivial bundle over $D$, the boundary of
$M \setminus \Int{N(K)}$
is a trivial $\partial F$--bundle over $S^2$.
Then by \cite{ES}, we see that the $F$--bundle
over $S^2$, $\rho : M \setminus \Int{N(K)} \to S^2$,
is trivial. Therefore, by an argument similar to the above,
we see that $M$ is diffeomorphic to the connected sum
of finite numbers of copies of $S^1 \times S^3$ and
$S^1 \tilde{\times} S^3$.

Conversely, suppose that $M$ is one of the manifolds
listed in the theorem. 

If $M$ is the standard $n$--dimensional sphere $S^n$,
then the standard projection $\R^{n+1} \to \R^{n-1}$
restricted to the unit sphere $S^n$ is a round fold map:
in fact, it is a so-called special generic map (see \cite{Sa93}, for example),
and has only definite fold as its singularities.

If $M$ is a connected sum of $a$ copies
of $S^1 \times S^{n-1}$ and $b$ copies of $S^1 \tilde{\times} S^{n-1}$,
then let us consider the compact surface with boundary, say $F$, obtained from
the $2$--disk by attaching $a$ orientable $1$--handles and $b$
non-orientable $1$--handles along the boundary. Then,
$(F \times S^{n-2}) \cup (\partial F \times D^{n-1})
= \partial (F \times D^{n-1})$ admits a round fold map into $\R^{n-1}$,
as is seen from the construction given in Example~\ref{ex1},
and this manifold is diffeomorphic to $M$.

If $M = S^{n-2} \times \Sigma$ for a closed connected
surface $\Sigma$, then it obviously admits a round fold map
into $\R^{n-1}$, as is seen by using the construction given
in Example~\ref{ex1} again.

Finally, if $n=4$ and $M = S^2 \tilde{\times} S^2$, then
it admits a special generic map into $\R^3$ which is also
a round fold map \cite{Sa93}.

This completes the proof.
\end{proof}

As a direct corollary to Theorem~\ref{thm:main}, we immediately get the following.

\begin{cor}
A closed $n$--dimensional manifold
with $n \geq 4$ homotopy equivalent to $S^n$
admits a round fold map into
$\R^{n-1}$ if and only if it is diffeomorphic to
the standard $n$--dimensional sphere.
\end{cor}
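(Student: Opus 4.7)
The reverse direction is immediate: case (1) of Theorem~\ref{thm:main} already exhibits a round fold map of the standard $S^n$ into $\R^{n-1}$.

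For the forward direction, the plan is to invoke Theorem~\ref{thm:main} to reduce $M$ to one of four models, and then rule out everything except the standard $S^n$ by exhibiting a homology or fundamental-group obstruction under the hypothesis $M \simeq S^n$. First I would dispatch case (2): any nontrivial connected sum of copies of $S^1 \times S^{n-1}$ and $S^1 \tilde{\times} S^{n-1}$ has $\pi_1$ a nontrivial free product of infinite cyclic groups, so its abelianization $H_1(M;\Z)$ is a nonzero free abelian group, contradicting $H_1(S^n;\Z)=0$ for $n \geq 2$.

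Next I would treat case (3), $M \cong S^{n-2} \times \Sigma$ with $\Sigma$ a closed connected surface, by splitting on $\Sigma$. If $\Sigma \cong S^2$, then (since $n-2 \geq 2$) the K\"unneth formula gives $H_2(M) \supseteq H_0(S^{n-2}) \otimes H_2(S^2) = \Z$, whereas $H_2(S^n)=0$ for $n \geq 4$. If $\Sigma \not\cong S^2$, then $\pi_1(\Sigma) \neq 1$, and hence $\pi_1(M) \cong \pi_1(S^{n-2}) \times \pi_1(\Sigma) = \pi_1(\Sigma) \neq 1$, which contradicts the simple-connectivity of $S^n$. Finally, case (4) arises only when $n=4$, and $H_2(S^2 \tilde{\times} S^2) \cong \Z^2 \neq 0 = H_2(S^4)$ rules it out.

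There is no genuine obstacle in this argument: all the real work is contained in Theorem~\ref{thm:main}, and the corollary reduces to a short check that each of the non-sphere model manifolds has a standard algebraic-topological invariant incompatible with being a homotopy sphere.
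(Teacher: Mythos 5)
Your proposal is correct and is essentially the argument the paper intends: the corollary is stated as an immediate consequence of Theorem~\ref{thm:main}, with the (unwritten) verification being exactly the elementary check you carry out, that each non-sphere model in the list has nontrivial $\pi_1$ or $H_2$ and so cannot be a homotopy $n$--sphere, leaving only the standard $S^n$.
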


Now let us discuss directed round fold maps.
Let $f : M \to \R^{n-1}$ be a round fold of a closed
\emph{orientable} $n$--dimensional manifold $M$.
For a component $c$ of $f(S(f))$, take a small
arc $\alpha \cong [-1, 1]$ in $\R^{n-1}$ that intersects $f(S(f))$
exactly at one point in $c$ transversely. We also assume that
the point $\alpha \cap f(S(f))$ is not an end point of $\alpha$.
Then, $f^{-1}(\alpha)$ is a compact surface
with boundary $f^{-1}(a) \cup f^{-1}(b)$, which
is diffeomorphic to a finite disjoint union of circles, where
$a$ and $b$ are the end points of $\alpha$.
Furthermore,
$f|_{f^{-1}(\alpha)} : f^{-1}(\alpha) \to \alpha$
can be regarded as a Morse function with exactly
one critical point. As $M$ is orientable, we see that
$f^{-1}(\alpha)$ is diffeomorphic to the union of $D^2$
(or $P$) and a finite number of copies of $A$
(see \cite{Sa04}, for example).
Therefore, the number of components of $f^{-1}(a)$
differs from that of $f^{-1}(b)$ exactly by one. If $f^{-1}(a)$
has more components than $f^{-1}(b)$, then we normally
orient $c$ from $b$ to $a$: otherwise, we orient $c$ from $a$ to $b$.
It is easily shown that this normal orientation is 
independent of the choice of $\alpha$.
In this way, each component of $f(S(f))$ is normally
oriented. If the normal orientation points inward,
then the component is said to be \emph{inward-directed}:
otherwise, \emph{outward-directed}.

\begin{dfn}
Let $f : M \to \R^{n-1}$ be a round fold map
of a closed orientable $n$--dimensional manifold. 
We say that $f$ is \emph{directed}
if all the components of $f(S(f))$ are inward-directed.
It is easy to see that a round
fold map $f$ is directed if and only if the number of
components of a regular fiber over a point in the
innermost component of $\R^{n-1} \setminus f(S(f))$ coincides with
the number of components of $S(f)$.
\end{dfn}

Then, as a corollary of the above proof of Theorem~\ref{thm:main},
we have the following.

\begin{thm}\label{thm:directed}
A closed connected orientable $n$--dimensional manifold
with $n \geq 4$ admits a directed round fold map into
$\R^{n-1}$ if and only if it is diffeomorphic to
one of the following manifolds:
\begin{enumerate}
\item standard $n$--dimensional sphere $S^n$,
\item connected sum of a finite number of copies
of $S^1 \times S^{n-1}$.
\end{enumerate}
\end{thm}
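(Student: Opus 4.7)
The plan is to convert directedness of $f$ into a Morse-theoretic constraint on the page of the induced open book, and then to use the bookkeeping of level-set circles to pin down the diffeomorphism type of $M$.

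The first step is to reuse the open book decomposition from the proof of Theorem~\ref{thm:main}. Since $M$ is orientable and connected, the page $F$ is an orientable compact connected surface and $h = f|_F : F \to [1/2,\infty)$ is a Morse function with $s$ interior critical points, distinct critical values $1,2,\ldots,s$, and $h(\partial F)=1/2$. A regular fiber of $f$ at radius $r$ is diffeomorphic to the level set $h^{-1}(r)$, a disjoint union of circles; as $r$ increases across a critical value of Morse index $0$, $1$, or $2$, the number of circles changes respectively by $+1$, by $\pm 1$ (according to whether the saddle splits a circle into two or merges two circles into one), or by $-1$. Directedness of $f$ thus translates to the condition that every critical point of $h$ is either an index-$2$ maximum or an index-$1$ merging saddle. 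Getting the sign conventions right here is the one point that needs genuine care.

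Next I would dispose of the closed-page case. If $\partial F=\emptyset$, then $h$ attains its global minimum at an interior index-$0$ critical point, violating directedness; this in particular rules out the possibilities $S^{n-2}\times\Sigma$ and $S^2\tilde{\times}S^2$ from Theorem~\ref{thm:main}. So $\partial F$ has some $b\geq 1$ components. The level-set count now decreases monotonically from $b$ at $r\searrow 1/2$ to $0$ past every critical value, each critical point contributing exactly $-1$, so $c_1+c_2=b$. Combining with the Euler-characteristic formula $\chi(F)=c_2-c_1=2-2g-b$ yields $c_2=1-g$; since $h$ has at least one interior maximum one has $c_2\geq 1$, hence $g=0$ and $F$ is planar, i.e.\ the $2$--disk with $b-1$ open sub-disks removed. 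The identification $M\cong\partial(F\times D^{n-1})$ from the proof of Theorem~\ref{thm:main} now shows that $M$ is diffeomorphic to $S^n$ if $b=1$ and to the connected sum of $b-1$ copies of $S^1\times S^{n-1}$ if $b\geq 2$.

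For the converse I would reuse Example~\ref{ex1}. Take $F=D^2$ equipped with a Morse function having a single maximum to recover the standard projection $S^n\to\R^{n-1}$, and for $b\geq 2$ take $F$ equal to a $2$--sphere with $b$ open disks removed, endowed with a Morse function with exactly one maximum, $b-1$ merging saddles, and no local minima; such a function exists by a straightforward planar-height-function construction. The round fold map produced by Example~\ref{ex1} has by construction only inward-directed critical spheres and is therefore directed.
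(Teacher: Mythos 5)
Your proof is correct and follows essentially the same route as the paper: both deduce from directedness that the page $F$ is an orientable surface of genus zero with non-empty boundary, conclude via $M \cong \partial(F \times D^{n-1})$, and use the open book construction of Example~\ref{ex1} for the converse. You merely make explicit the level-set and Euler-characteristic bookkeeping that the paper leaves implicit.
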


\begin{proof}
Suppose that $f : M \to \R^{n-1}$ is a directed
round fold map of a closed connected orientable $n$--dimensional
manifold.
Then, the surface $F$ given in the proof of Theorem~\ref{thm:main}
must be a compact orientable surface 
with non-empty boundary of genus zero, since $f$ is directed.
Then, we see that $M$ must be diffeomorphic to $S^n$
or to the connected sum of a finite number of copies
of $S^1 \times S^{n-1}$. Conversely, we see that $S^n$ and the
connected sum of a finite number of copies of $S^1 \times S^{n-1}$
admit directed round fold maps by using the open book
construction described in Example~\ref{ex1} associated with an appropriate Morse
function on a compact orientable surface of non-empty boundary
of genus zero. (More precisely, we use a Morse function on
such a surface such that the number of components of level sets
increases as the level value decreases.)
This completes the proof.
\end{proof}

For example, for a closed connected orientable surface
$\Sigma$, the manifold $S^{n-2} \times \Sigma$, $n \geq 4$,
admits a round fold map into $\R^{n-1}$, but does
not admit a directed one.

\section{Remarks and related results}\label{section4}

\begin{rem}
The manifolds appearing in Theorem~\ref{thm:main}
are all null-cobordant. In particular, their
Stiefel--Whitney numbers all vanish, and their signatures all
vanish when the dimension is divisible by $4$ and the manifold
is orientable. Compare this with \cite[Proposition~3.12]{Sa92}.
\end{rem}

\begin{rem}
The fundamental groups of the manifolds appearing in 
Theorem~\ref{thm:main} are either trivial, free,
or a surface fundamental group.
\end{rem}

\begin{rem}
In \cite[Proposition~3.6]{Sa93-2}, it has been given
a topological characterization of simply connected closed $4$--dimensional
manifolds that admit simple fold maps into $\R^3$.
Compare this with our Theorem~\ref{thm:main}.
\end{rem}

\begin{rem}
We have seen that if a closed connected $n$--dimensional
manifold admits a round fold map into $\R^{n-1}$, then
the manifold admits an open book structure over $S^{n-2}$.

On the other hand, as our proof shows,
for $n \geq 4$, if a closed connected $n$--dimensional
manifold admits an open book structure over $S^{n-2}$ with
non-empty binding, then
it must be diffeomorphic to
one of the following manifolds:
\begin{enumerate}
\item standard $n$--dimensional sphere $S^n$,
\item a connected sum of finite numbers of copies
of $S^1 \times S^{n-1}$ or $S^1 \tilde{\times} S^{n-1}$.
\end{enumerate}
If the binding is empty, then the manifold
is the total space of a $\Sigma$--bundle over $S^{n-2}$
for some closed connected surface $\Sigma$.
\end{rem}

\begin{rem}
For $n=3$, a characterization
of closed \emph{orientable} $3$--dimensional manifolds
that admit round fold maps into $\R^2$ has been
obtained in \cite{KS}.
For non-orientable $3$--dimensional manifolds,
such a characterization has not been known,
as far as the authors know.

Those closed orientable $3$--dimensional manifolds which
admit directed round fold maps into $\R^2$ have also been
characterized in \cite{KS}.
\end{rem}

Let $f : M \to \R^{n-1}$ be a round
fold map of a closed connected $n$--dimensional manifold $M$.
We denote by $n_0(f)$ (resp.\ $n_1(f)$) the number
of connected components of $S(f)$ with absolute index
$2$ (resp.\ $1$).

\begin{prop}\label{prop:euler}
When $n = \dim{M}$ is even, the Euler characteristic of $M$
is equal to $2(n_0(f) - n_1(f))$.
\end{prop}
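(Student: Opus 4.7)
The plan is to exploit the open book structure on $M$ furnished by the round fold map (as set up at the beginning of the proof of Theorem~\ref{thm:main}) and reduce the computation of $\chi(M)$ to Morse theory on the page. Let $F = f^{-1}(J)$ be the page, $K = f^{-1}(0)$ the binding, and $h = f|_F : F \to J$ the induced Morse function; then $h$ has exactly $s$ critical points, one lying over each concentric sphere $C_r$ in $f(S(f))$.

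The first step is to match the Morse index of each critical point of $h$ with the signature data of the corresponding fold. Near $q \in S(f) \cap F$, take the fold normal form
$$(x_1, \ldots, x_n) \mapsto (x_1, \ldots, x_{n-2}, \pm x_{n-1}^2 \pm x_n^2),$$
and choose coordinates on $\R^{n-1}$ so that $J$ corresponds locally to the $y_{n-1}$-axis (this is possible because $J$ meets the fold sphere $C_r$ transversally at the single point $f(q)$). Then $F$ is locally the $(x_{n-1}, x_n)$-plane $\{x_1 = \cdots = x_{n-2} = 0\}$, on which $h$ is given by $\pm x_{n-1}^2 \pm x_n^2$, so its Morse index at $q$ equals the number $\lambda \in \{0,1,2\}$ of negative signs. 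Since $p = n-1$, the absolute index $\max\{\lambda, 2-\lambda\}$ equals $2$ iff $\lambda \in \{0,2\}$ and equals $1$ iff $\lambda = 1$. Writing $c_i$ for the number of critical points of $h$ of Morse index $i$, we therefore obtain $c_0 + c_2 = n_0(f)$ and $c_1 = n_1(f)$, so by standard Morse theory on the compact surface $F$,
$$\chi(F) = c_0 - c_1 + c_2 = n_0(f) - n_1(f).$$

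The second step is to show $\chi(M) = 2\chi(F)$, using that $n$ even gives $\chi(S^{n-2}) = 2$. If $K = \emptyset$, then $F$ is closed and $M$ is the total space of an $F$--bundle over $S^{n-2}$, so multiplicativity of the Euler characteristic in fiber bundles yields $\chi(M) = \chi(F)\chi(S^{n-2}) = 2\chi(F)$. If $K \neq \emptyset$, decompose $M = N(K) \cup (M \setminus \Int N(K))$ with intersection $\partial N(K)$; the complement fibers over $S^{n-2}$ with fiber $F$ (via the submersion $\rho$ of the proof of Theorem~\ref{thm:main}), while $N(K) \cong K \times D^{n-1}$ and $\partial N(K) \cong K \times S^{n-2}$ both have vanishing Euler characteristic because $\chi(K) = 0$. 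Inclusion--exclusion then gives $\chi(M) = 2\chi(F) + 0 - 0 = 2\chi(F)$.

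Combining the two steps yields $\chi(M) = 2(n_0(f) - n_1(f))$, as required. The main technical point is the first step, namely verifying that the Morse index of $h$ on the page exactly records the sign count $\lambda$ in the fold normal form; once that identification is in hand, the remaining Euler characteristic calculation is routine.
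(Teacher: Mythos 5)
Your proof is correct, but it takes a genuinely different route from the paper's. The paper composes $f$ with a linear projection $\ell : \R^{n-1} \to \R$ to obtain a Morse function $\ell \circ f$ on $M$ directly: each component of $S(f)$ contributes exactly two critical points (over the two poles of the corresponding sphere $C_r$ with respect to $\ell$), with even indices for definite fold components and odd indices for indefinite ones, whence $\chi(M) = 2n_0(f) - 2n_1(f)$. You instead exploit the open book decomposition from the proof of Theorem~\ref{thm:main}: you identify the Morse index of the page function $h$ at each critical point with the sign count $\lambda$ of the fold normal form (correctly noting that the possible orientation ambiguity $\lambda \leftrightarrow 2-\lambda$ does not affect parity), deduce $\chi(F) = n_0(f) - n_1(f)$, and then get $\chi(M) = \chi(S^{n-2})\chi(F) = 2\chi(F)$ by multiplicativity plus Mayer--Vietoris over the binding. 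Your version makes the factor $2$ conceptually transparent and produces the clean intermediate identity $\chi(F) = n_0(f) - n_1(f)$, but it leans on the full fibration structure established earlier; the paper's argument is shorter, needs only that $f(S(f))$ is concentric, and yields extra information as a byproduct, namely that the critical points of $\ell \circ f$ have indices in $\{0,1,2,n-2,n-1,n\}$, which is what gives the vanishing of the middle Betti numbers stated right after the proposition.
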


\begin{proof}
We may assume that $f(S(f))$ is a concentric
family of spheres in $\R^{n-1}$. Then, $\ell \circ f : M \to \R$
is a Morse function on $M$, where $\ell$ is a projection to a real line.
We see that the number of critical points of $\ell \circ f$ with
even indices equals to $2n_0(f)$ and that of odd indices equals to $2n_1(f)$.
Then, the result follows.
\end{proof}

In the above proof, we see that the indices of the critical
points of the Morse function $\ell \circ f$ are 
$0$, $1$, $2$, $n-2$, $n-1$ or $n$.
In particular, $b_3(M) = b_4(M) = \cdots = b_{n-3}(M) = 0$ when $n \geq 6$,
where $b_j(M)$ is the $j$--th Betti number of $M$ (for any 
coefficient).

\begin{rem}
Proposition~\ref{prop:euler}
is also a consequence of a theorem of Fukuda \cite{Fuk}.
\end{rem}


\begin{ex}
Let us consider an arbitrary $(n-2)$--dimensional
closed connected manifold $X$ which embeds into $\R^{n-1}$,
$n \geq 4$,
such that the closed connected $n$--dimensional manifold
$M = \Sigma \times X$ is not diffeomorphic to any manifold
listed in Theorem~\ref{thm:main}, where $\Sigma$ is a closed
connected surface not diffeomorphic to $S^2$.
(For example, consider $X = S^1 \times S^{n-3}$.)
Then, $M = \Sigma \times X$ admits a fold map $f : M \to \R^{n-1}$
such that $f|_{S(f)}$ is an embedding onto a union
of parallel copies of $X$ embedded in $\R^{n-1}$.
(For example, for an arbitrary Morse function $h : \Sigma
\to \R$, consider the composition
$$M = \Sigma \times X \spmapright{h \times \id_X} \R \times X
\hookrightarrow \R^{n-1},$$
where the last map is an embedding.)
However, $M$ does not admit a round fold map into
$\R^{n-1}$ according to our Theorem~\ref{thm:main}.

Recall that for $n=3$, if a closed orientable
$3$--dimensional manifold admits a simple fold map
into $\R^2$, then it also admits a round fold map into $\R^2$
as has been shown in \cite{KS}. The above example
shows that this is not the case for $n \geq 4$ in general.
\end{ex}

\section{Classification of round fold maps}\label{section5}

In this section, we consider the classification problem
of round fold maps of closed $n$--dimensional manifolds
into $\R^{n-1}$, $n \geq 4$.


We recall the following standard definition.

\begin{dfn}
Let $f_i : M_i \to N_i$ be smooth maps of smooth
manifolds, $i = 0, 1$. We say that $f_0$ and $f_1$
are \emph{$C^\infty$ $\mathcal{A}$--equivalent} if
there exist diffeomorphisms $\psi : M_0 \to M_1$
and $\Psi : N_0 \to N_1$ such that
$f_1 = \Psi \circ f_0 \circ \psi^{-1}$.

Furthermore, we say that $f_0$ and $f_1$ are
\emph{$C^\infty$ $\mathcal{R}$--equivalent} if
$N_0 = N_1$ and
there exists a diffeomorphism $\psi : M_0 \to M_1$
such that $f_1 = f_0 \circ \psi^{-1}$.
\end{dfn}

For our classification of round fold maps up to
$\mathcal{A}$--equivalence, we will need the
following.

\begin{dfn}
Let $f : M \to \R^{n-1}$ be a round fold map
of a closed connected $n$--dimensional manifold.
Then, there exists a diffeomorphism $\Phi : \R^{n-1}
\to \R^{n-1}$ such that the critical value set
of the round fold map $\Phi \circ f$ coincides with
$\cup_{r=1}^s C_r$, where $s$ is the number of
connected components of $S(f)$. For $J = [1/2, \infty)
\times \{0\} \subset \R \times \R^{n-2} = \R^{n-1}$,
we set $F = (\Phi \circ f)^{-1}(J)$, which is a compact surface
and is connected if $n \geq 4$.
Then the restriction $\Phi \circ f|_F : F \to J$
is a Morse function. We call $F$ a \emph{page} of $f$
and the function $\Phi \circ f|_F$ a \emph{page
Morse function} associated with $f$.
\end{dfn}

\begin{lem}\label{lem:page}
Let $f: M \to \R^{n-1}$ be a round fold map of a
closed connected $n$--dimensional manifold.
Then, the page Morse functions associated with $f$
are unique up to $C^\infty$ $\mathcal{R}$--equivalence.
\end{lem}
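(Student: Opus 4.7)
The plan is to exhibit the $\mathcal{R}$-equivalence by constructing a suitable ambient isotopy of the target $\R^{n-1}$ that preserves the critical value set and lifting it to $M$. Fix two valid normalizations $\Phi_0, \Phi_1$ of $f$ and set $g_i := \Phi_i \circ f$, $F_i := g_i^{-1}(J)$, and $h_i := g_i|_{F_i}$. Let $\Psi := \Phi_1 \circ \Phi_0^{-1}$; this is a diffeomorphism of $\R^{n-1}$ with $\Psi(\bigcup_{r=1}^s C_r) = \bigcup_{r=1}^s C_r$. Since $\Psi$ is a self-homeomorphism of $\R^{n-1}$ and the spheres $C_1, \ldots, C_s$ are nested (ordered by inclusion of their bounded complementary regions), $\Psi$ must preserve this ordering and hence each $C_r$ setwise. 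Writing $J_0 := J$ and $J_1 := \Psi^{-1}(J_0)$, we therefore have $F_1 = g_0^{-1}(J_1)$, with $J_0$ and $J_1$ both properly embedded arcs meeting each $C_r$ transversely in one point.

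The first step will be to construct a smooth one-parameter family of parameterized proper embeddings $\gamma_t : [1/2,\infty) \to \R^{n-1}$, $t \in [0,1]$, joining $\gamma_0(s) := (s,0,\ldots,0)$ to $\gamma_1(s) := \Psi^{-1}(\gamma_0(s))$ and satisfying $\gamma_t(r) \in C_r$ for every integer $r \in \{1,\ldots,s\}$ and every $t$. This is carried out slab by slab: on each $C_r$ one selects a path from $\gamma_0(r)$ to $\gamma_1(r)$ in $C_r \cong S^{n-2}$, which is possible since $n-2 \geq 2$; on each region of $\R^{n-1} \setminus \bigcup_r C_r$ (innermost ball, annuli, outer unbounded piece) one connects the resulting endpoint data by a path in the space of embedded arcs with fixed endpoints, which is path-connected for $n \geq 4$.

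Next, I would promote $\gamma_t$ to an ambient isotopy $\Theta_t$ of $\R^{n-1}$ with $\Theta_0 = \id$, $\Theta_t \circ \gamma_0 = \gamma_t$, and $\Theta_t(C_r) = C_r$ for every $r,t$, by a relative isotopy-extension argument applied slab by slab. I would then lift $\Theta_t$ through $g_0$ to an ambient isotopy $\tilde\Theta_t$ of $M$ satisfying $g_0 \circ \tilde\Theta_t = \Theta_t \circ g_0$. Over each annular region of $\R^{n-1} \setminus \bigcup_r C_r$ the map $g_0$ is a smooth fiber bundle (as established in the proof of Theorem~\ref{thm:main}), so horizontal lifts yield the isotopy there; near each $C_r$ the standard fold normal form makes the lift explicit by direct formula; a partition of unity on $\R^{n-1}$ patches the pieces into a single $\tilde\Theta_t$.

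Setting $\psi := \tilde\Theta_1|_{F_0}$, we find $\tilde\Theta_1(F_0) = g_0^{-1}(\Theta_1(J_0)) = g_0^{-1}(J_1) = F_1$, and since $\Theta_1 \circ \gamma_0 = \gamma_1 = \Psi^{-1} \circ \gamma_0$ means that $\Theta_1$ agrees with $\Psi^{-1}$ on $J_0$,
\[
h_1(\psi(x)) = \Psi\bigl(g_0(\tilde\Theta_1(x))\bigr) = \Psi\bigl(\Theta_1(g_0(x))\bigr) = \Psi\bigl(\Psi^{-1}(g_0(x))\bigr) = h_0(x)
\]
for every $x \in F_0$, yielding the required $\mathcal{R}$-equivalence. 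The main technical obstacle I anticipate is arranging in the second and third steps that the isotopies respect the stratification by the individual spheres $C_r$ rather than merely their union; this is handled region by region, and the crucial input for the lifting step is the fiber bundle structure on annular regions already isolated in \S\ref{section3} together with the local canonical form of fold singularities.
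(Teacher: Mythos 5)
Your proposal is correct and follows essentially the same route as the paper: the paper likewise constructs an ambient isotopy $H_t$ of $\R^{n-1}$ preserving each $C_r$ and agreeing with $\Phi_0\circ\Phi_1^{-1}$ on $J_0$ at time $1$ (obtained by integrating a vector field tangent to $\cup_{r}C_r$), lifts it through $\Phi_0\circ f$ to an isotopy of $M$, and concludes with the same algebraic identity $h_1\circ\psi=h_0$. The one caution is your claim that the space of embedded arcs with fixed endpoints in each complementary region is path-connected for $n\geq 4$: when $n=4$ these regions are $3$--dimensional and properly embedded arcs can knot, so one must additionally use that $J_1$ is the image of $J_0$ under a diffeomorphism preserving each region --- a point the paper's own proof also passes over silently.
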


\begin{proof}
Let $\Phi_i : \R^{n-1} \to \R^{n-1}$ be diffeomorphisms such that
the critical value set
of the round fold map $\Phi_i \circ f$ coincides with
$\cup_{r=1}^s C_r$, where $s$ is the number of connected
components of $S(f)$, $i = 0, 1$.
Set $J_0 = J$, $J_1 = \Phi_0 \circ \Phi_1^{-1}(J_0)$,
$F_i = (\Phi_0 \circ f)^{-1}(J_i)$
and $h_i = \Phi_0 \circ f|_{F_i}$, $i = 0, 1$.
Note that $h_0 : F_0 \to J_0$ is a page Morse function associated with $f$,
while $F_1 = (\Phi_1 \circ f)^{-1}(J_0)$ is another page for $f$ and 
$\Phi_1 \circ \Phi_0^{-1} \circ h_1: F_1 \to J_0$ 
is another page Morse function associated with $f$.

By integrating a certain vector field tangent
to $\cup_{r=1}^s C_r$, we can
construct a smooth isotopy $H_t : \R^{n-1} \to \R^{n-1}$,
$t \in [0, 1]$, with the following properties:
\begin{eqnarray}
H_0 & = & \id_{\R^{n-1}}, \\
H_t(C_r) & = & C_r, \quad r = 1, 2, \ldots, s, \\
H_1(J_0) & = & J_1, \\
H_1|_{J_0} & = & \Phi_0 \circ \Phi_1^{-1}|_{J_0}. \label{eq1}
\end{eqnarray}
Then, by lifting the vector field generated by the
isotopy $\{H_t\}_{t \in [0, 1]}$ with respect to $\Phi_0 \circ f$,
we can construct a smooth isotopy $\varphi_t : M \to M$,
$t \in [0, 1]$, such that the following holds:
\begin{eqnarray}
\varphi_0 & = & \id_M, \\
\varphi_t(S(f)) & = & S(f), \\
\varphi_1(F_0) & = & F_1, \\
H_t \circ \Phi_0 \circ f & = & \Phi_0 \circ f \circ \varphi_t, \quad
t \in [0, 1]. \label{eq2}
\end{eqnarray}
Then, we see that
\begin{eqnarray*}
\Phi_0 \circ f \circ \varphi_1|_{F_0} 
& = & H_1 \circ \Phi_0 \circ f|_{F_0} \\
& = & \Phi_0 \circ \Phi_1^{-1} \circ \Phi_0 \circ f|_{F_0}
\end{eqnarray*}
by virtue of (\ref{eq2}) and (\ref{eq1}) above.
This implies that
$$\Phi_1 \circ f \circ \varphi_1|_{F_0} = \Phi_0 \circ f|_{F_0}.$$
Hence, the two page Morse functions associated with $f$
are $C^\infty$ $\mathcal{R}$--equivalent.
\end{proof}

Then, we have the following classification theorem.

\begin{thm}\label{thm:classification}
Let $f_i : M_i \to \R^{n-1}$ be round fold maps
of closed connected $n$--dimensional manifolds with $n \geq 5$,
$i = 0, 1$. Then $f_0$ and $f_1$ are $C^\infty$
$\mathcal{A}$--equivalent if and only if their page Morse functions
are $C^\infty$ $\mathcal{R}$--equivalent.
\end{thm}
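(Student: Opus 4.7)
My plan is to establish the two implications of Theorem~\ref{thm:classification} separately. For the ``only if'' direction, suppose $f_0$ and $f_1$ are $\mathcal{A}$-equivalent via diffeomorphisms $\psi : M_0 \to M_1$ and $\Psi : \R^{n-1} \to \R^{n-1}$. By Lemma~\ref{lem:page}, I am free to choose any convenient pair of standardizing diffeomorphisms $\Phi_i$ of $\R^{n-1}$, so I arrange both $\Phi_i \circ f_i$ to have critical value set $\bigcup_{r=1}^{s} C_r$. The composite $\Phi_1 \circ \Psi \circ \Phi_0^{-1}$ is then a self-diffeomorphism of $\R^{n-1}$ preserving this concentric family. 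Imitating the construction in the proof of Lemma~\ref{lem:page}, I integrate a vector field tangent to $\bigcup_{r} C_r$ to produce an ambient isotopy making this composite additionally preserve the half-line $J$, and lift it through $\Phi_0 \circ f_0$ to an isotopy of $M_0$. Modifying $\psi$ by this lifted isotopy yields a diffeomorphism sending $F_0$ to $F_1$ and conjugating $h_0$ to $h_1$, giving the required $\mathcal{R}$-equivalence of page Morse functions.

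For the ``if'' direction, assume a diffeomorphism $\varphi : F_0 \to F_1$ satisfies $h_1 \circ \varphi = h_0$. My plan is to show that each $f_i$ is $\mathcal{A}$-equivalent to the model round fold map of Example~\ref{ex1} built from the pair $(F_i, h_i)$; since $\varphi$ induces an evident $\mathcal{A}$-equivalence between these two models, the conclusion will follow. To this end, the projection $\rho_i = \pi \circ f_i : M_i \setminus \Int N(K_i) \to S^{n-2}$ is a smooth fiber bundle with fibers the pages, carrying additional structure: over each point $x \in S^{n-2}$, the preimage of the ray through $x$ inherits a Morse function that must match $h_i$ under any trivialization. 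Thus $\rho_i$, \emph{together with this family of Morse functions}, is classified by an element of $\pi_{n-3}(G_i)$, where $G_i$ is the identity component of the group of diffeomorphisms of $F_i$ preserving $h_i$.

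The decisive input is then the work of Maksymenko~\cite{Mak1, Mak2}, which describes the homotopy type of the Morse-function-preserving diffeomorphism group of a compact surface: for a Morse function with pairwise distinct critical values, the identity component is either contractible or aspherical with a very restricted fundamental group (free or a surface group). In particular $\pi_k(G_i) = 0$ for every $k \geq 2$, so for $n \geq 5$ the bundle $\rho_i$ admits a Morse-function-preserving trivialization, witnessing that the restriction of $f_i$ to the exterior of $N(K_i)$ agrees with the product model of Example~\ref{ex1}. The gluing across $\partial N(K_i)$ is then standard, the relevant structure group being the identity component of $\mathrm{Diff}(\partial F_i)$, whose higher homotopy also vanishes in the required range; assembling these trivializations via $\varphi$ yields the desired $\mathcal{A}$-equivalence.

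I expect the main obstacle to be pinning down the precise form of Maksymenko's results needed here: they are stated under somewhat delicate non-degeneracy hypotheses on the Morse function, and I will need to verify that every page Morse function arising from a round fold map falls within his setting. A related subtle point, separate from plain bundle triviality, is that the triviality of $\rho_i$ as an $F_i$-bundle was already established in the proof of Theorem~\ref{thm:main}, but that alone is insufficient: the trivialization must be chosen so as to align the full family of Morse functions along each ray, and this refinement is precisely what the vanishing of $\pi_{n-3}(G_i)$ delivers.
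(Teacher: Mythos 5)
Your argument is correct and is essentially the paper's own proof: both rest on trivializing the page bundle over the two hemispheres of $S^{n-2}$ compatibly with the family of page Morse functions, killing the clutching map via Maksymenko's computation that the identity component of the Morse-function-preserving diffeomorphism group of a compact surface is contractible or homotopy equivalent to $S^1$ (hence has vanishing $\pi_{n-3}$ for $n \geq 5$), and handling the binding via the vanishing of the higher homotopy of the diffeomorphism group of the circle. The only cosmetic difference is that you normalize each $f_i$ to the model of Example~\ref{ex1} and then compare the two models via $\varphi$, whereas the paper compares $f_0$ and $f_1$ directly over the decomposition $\R^{n-1} = D \cup R \cup L$; the mathematical content is the same.
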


\begin{proof}
Necessity follows easily from Lemma~\ref{lem:page}.

Conversely, suppose that the page Morse functions
of $f_i$, $i = 0, 1$, are $C^\infty$ $\mathcal{R}$--equivalent.
We may assume that $f_i(S(f_i))$, $i = 0, 1$, are of the form
$\cup_{r=1}^s C_r$.
Set $J = [1/2, \infty) \times \{0\}
\subset \R \times \R^{n-2}$, $F_i = f_i^{-1}(J)$ and
$h_i = f_i|_{F_i} : F_i \to J$, $i = 0, 1$.
By assumption, there exists a diffeomorphism $\varphi : F_0
\to F_1$ such that $f_0 = f_1 \circ \varphi$.

Let us consider the decomposition
\begin{equation}
\R^{n-1} = D \cup R \cup L,
\label{eq:decomposition}
\end{equation}
where $D$ is the closed disk in $\R^{n-1}$
centered at the origin with radius $1/2$,
\begin{eqnarray*}
R = ([0, \infty) \times \R^{n-2}) \setminus
\Int{D} & \mbox{and} &
L = ((-\infty, 0] \times \R^{n-2}) \setminus \Int{D} \\
& \subset & \R \times \R^{n-2} = \R^{n-1}.
\end{eqnarray*}
Let us also consider the associated decomposition
$$M = f_i^{-1}(D) \cup f_i^{-1}(R) \cup f_i^{-1}(L),$$
$i = 0, 1$.
As has been observed in the proof of Theorem~\textup{\ref{thm:main}},
$f_i^{-1}(D)$ is either empty or the total space
of a trivial fiber bundle over $D$, where the
fiber is a finite union of circles,
while $f_i^{-1}(R)$ and $f_i^{-1}(L)$ are the total
spaces of locally trivial fiber bundles over the $(n-2)$--dimensional disk,
$i = 0, 1$. 

Let $T_R : [1/2, \infty) \times D^{n-2} \to R$ (or
$T_L : [1/2, \infty) \times D^{n-2} \to L$)
be the diffeomorphism defined by 
$T_R(r, x) = rx$ (resp.\ $T_L(r, x) = -rx$), where
we identify $D^{n-2}$ with
$$\{(x_1, x_2, \ldots, x_{n-1}) \in \R^{n-1}\,|\, x_1^2 + x_2^2 + \cdots
+ x_{n-1}^2 = 1, x_1 \geq 0\} \subset S^{n-2}.$$
Then, the map 
$$
T_R^{-1} \circ f_i|_{f_i^{-1}(R)} :
f_i^{-1}(R) \to [1/2, \infty) \times D^{n-2}$$
can be identified with a family
of Morse functions parametrized by $D^{n-2}$, $i = 0, 1$.
More precisely, to each $x \in D^{n-2}$ is associate
the Morse function 
$$\eta_1 \circ T_R^{-1} \circ f_i|_{F_{x, i}} : F_{x, i} \to [1/2, \infty),$$
where $F_{x, i} = f_i^{-1}(T_R([1/2, \infty) \times \{x\}))$
and $\eta_1 : [1/2, \infty) \times D^{n-2} \to [1/2, \infty)$
is the projection to the first factor.
By using a vector field argument (see, for example,
\cite{BH, BJ}), we can construct a trivializing
diffeomorphism $\tilde{\varphi}_i : F_i \times D^{n-2}
\to f_i^{-1}(R)$ in such a way that
we have the commutative diagram
$$
\begin{CD}
f_i^{-1}(R) @< \tilde{\varphi}_i << F_i \times D^{n-2} @> \tilde{\eta}_{2, i} >> D^{n-2} \\
@VV f_i V @VV h_i \times \id_{D^{n-2}} V @VV \id_{D^{n-2}} V \\
R @< T_R << [1/2, \infty) \times D^{n-2} @> \eta_2 >> D^{n-2},
\end{CD}
$$
where $\eta_2 : [1/2, \infty) \times D^{n-2} \to D^{n-2}$
and $\tilde{\eta}_{2, i} : F_i \times D^{n-2} \to D^{n-2}$
are the projections to the second factors, $i = 0, 1$.
A similar argument applies to $f_i^{-1}(L)$ as well.

By our assumption together with the above observation,
we see that 
$$f_0 : f_0^{-1}(R) \to R
\quad \mbox{ and } \quad f_0 : f_0^{-1}(L) \to L$$ 
are $C^\infty$ $\mathcal{R}$--equivalent
to 
$$f_1 : f_1^{-1}(R) \to R
\quad \mbox{ and } \quad
f_1 : f_1^{-1}(L) \to L,$$
respectively.

Now, through the above trivializations,
the attaching diffeomorphisms
between $f_i^{-1}(R)$ and $f_i^{-1}(L)$, $i = 0, 1$,
are identified with families
of diffeomorphisms of compact surfaces that
preserve the page Morse functions,
parametrized by $\partial D^{n-2} = S^{n-3}$.
By a result of Maksymenko \cite{Mak1, Mak2},
the space of such diffeomorphisms
has vanishing $(n-3)$--th homotopy group, as we are
assuming $n \geq 5$.
This implies that, by changing the above trivializations
slightly near the attaching parts, we may assume that
the attaching diffeomorphisms for $f_0$ and $f_1$
coincide with each other. 
Therefore, we can construct a diffeomorphism
$f_0^{-1}(R \cup L) \to f_1^{-1}(R \cup L)$
that gives $\mathcal{R}$--equivalence
between $f_0$ and $f_1$ over $R \cup L$.

Now, as $f_0$ and $f_1$ are trivial fiber bundles
over $D$, it is easy to extend this diffeomorphism
to a diffeomorphism $M_0 \to M_1$ that
gives $\mathcal{R}$--equivalence
between $f_0$ and $f_1$ over $\R^{n-1}$.
This is because the group of diffeomorphisms of the
circle has trivial $(n-2)$--th homotopy group.
This completes the proof.
\end{proof}

\begin{rem}
As the above proof shows, if the given
round fold map $f$ is of the standard form
(i.e., if $f(S(f))$ is of the form
$\cup_{r=1}^s C_r$), then its $C^\infty$
$\mathcal{R}$--equivalence class is determined
by the $\mathcal{R}$--equivalence class of 
its page Morse function.
\end{rem}

\begin{rem}
As has been shown in \cite{Izar4} (see also \cite[proof of
Theorem~3.8]{BCN}), two Morse functions on a compact connected
surface are $C^\infty$ $\mathcal{A}$--equivalent
if and only if their associated functions on the Reeb graphs
(with orientation reversing information when the source surface
is non-orientable)
are topologically equivalent. A Reeb graph is the quotient space
of the source surface obtained by identifying each connected
component of level sets to a point, and such a space
is known to have the structure of a finite graph
(see \cite{R}).
In particular, such Morse functions
can be classified up to $C^\infty$ $\mathcal{A}$--equivalence
by using purely combinatorial objects. If we use appropriate
functions on Reeb graphs, classification up to
$C^\infty$ $\mathcal{R}$--equivalence is also possible.
\end{rem}

Now, let us consider the case where $n = 4$.
In this case, we have the following.

\begin{thm}\label{thm:classification2}
Let $f_i : M \to \R^3$ be round fold maps
of a closed connected $4$--dimensional manifold $M$,
$i = 0, 1$. Then $f_0$ and $f_1$ are $C^\infty$
$\mathcal{A}$--equivalent if and only if 
exactly one of the following holds.
\begin{enumerate}
\item Both of $f_0$ and $f_1$ have indefinite
fold points and the page Morse functions of
$f_0$ and $f_1$ are $\mathcal{R}$--equivalent.
\item Both of $f_0$ and $f_1$ have only definite
fold points as their singularities and $M$ is
diffeomorphic to $S^4$.
\item Both of $f_0$ and $f_1$ have only definite
fold points as their singularities, $M$ is
diffeomorphic to an $S^2$--bundle over $S^2$,
and the self-intersection numbers of the components
of $S(f_0)$ coincide with those of $S(f_1)$ up
to order, with respect to a fixed orientation of $M$.
\end{enumerate}
\end{thm}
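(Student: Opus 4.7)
The plan is to split into the three cases of the theorem, handling case (1) by a direct adaptation of the proof of Theorem~\ref{thm:classification} and cases (2), (3) by separate arguments specific to the definite-only situation. The three cases are mutually exclusive: the existence of indefinite folds is an $\mathcal{A}$-invariant, and the proof of Theorem~\ref{thm:main} shows that in the definite-only situation the page surface must be either $D^2$ (forcing $M \cong S^4$, case (2)) or $S^2$ (forcing $M$ to be an $S^2$-bundle over $S^2$, case (3)), so that the classification is partitioned accordingly.

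Necessity of the page-Morse-function condition in (1) is immediate from Lemma~\ref{lem:page}. Necessity of the self-intersection condition in (3) follows because an $\mathcal{A}$-equivalence restricts to a self-diffeomorphism of $M$ carrying $S(f_0)$ to $S(f_1)$, so the multiset of self-intersection numbers of the embedded $2$-sphere components is preserved. There is nothing further to check for necessity in case (2).

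For sufficiency of (1), the proof of Theorem~\ref{thm:classification} goes through almost verbatim. The only new point is that $D^2$-bundles and $A$-bundles over $S^{n-2} = S^2$ can a priori be non-trivial when $n=4$; however, the hypothesis of an indefinite fold guarantees that a $P$- or $B$-bundle piece (automatically trivial over a disk) appears in the decomposition of $M$, and the compatibility of the $S^1$-boundary bundles along its interfaces forces every adjacent $D^2$- or $A$-piece to be trivial as well, as in the proof of Theorem~\ref{thm:main}. With all pieces trivialized, Maksymenko's theorem \cite{Mak1, Mak2} still supplies the vanishing of the relevant $\pi_1$ of the group of Morse-function-preserving diffeomorphisms of the page (the indefinite critical point keeps the page Morse function outside the small list of exceptional cases), and one glues over the equatorial $S^1 \subset S^2$ exactly as in the higher-dimensional case.

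For sufficiency of (2), both $f_0$ and $f_1$ are special generic maps $S^4 \to \R^3$ with connected singular locus, and I would invoke the known classification of such maps on $S^n$ from \cite{Sa93}. For sufficiency of (3), the $f_i$ exhibit $M$ as an $S^2$-bundle over $S^2$ via $\rho_i = \pi \circ f_i$, with the two components of $S(f_i)$ giving a pair of disjoint sections (the preimages of the extrema of the page Morse function) whose self-intersection numbers detect the bundle class. The hypothesis on self-intersection multisets then implies that the two bundle structures $\rho_0,\rho_1$ are isomorphic, and after composing with such an isomorphism — combined with a target self-diffeomorphism of $\R^3$ that swaps the innermost and outermost concentric spheres if the ordering of the sections requires — one obtains the desired $\mathcal{A}$-equivalence. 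The main obstacle is this last step: one must verify that a bundle isomorphism $M \to M$ can be chosen carrying the unordered pair of distinguished sections of $f_0$ to that of $f_1$, which amounts to controlling the action of the relevant mapping class group on isotopy classes of sections of the $S^2$-bundle and then reconciling the two possible pairings with the inward/outward structure of the family of concentric circles in $\R^3$.
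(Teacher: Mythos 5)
Your overall architecture (mutual exclusivity of the three cases, necessity via Lemma~\ref{lem:page} and invariance of self-intersection numbers, and sufficiency of (1) by rerunning the proof of Theorem~\ref{thm:classification} with Maksymenko's contractibility of the stabilizer of a page Morse function having a saddle) matches the paper. One small remark on (1): the triviality of $D^2$-- and $A$--bundle pieces is not actually an issue in that argument, since the decomposition used there is $\R^3 = D \cup R \cup L$ and the bundles over the half-disks $R, L \cong [1/2,\infty)\times D^{2}$ are automatically trivial; the only obstruction lives in $\pi_1$ of the Morse-function-preserving diffeomorphism group over the equator $\partial D^{2} = S^{1}$, which Maksymenko's theorem kills exactly as you say.

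The gaps are in (2) and (3). For (2), you defer to ``the known classification of such maps on $S^n$ from \cite{Sa93}'', but that reference characterizes which manifolds admit special generic maps; it does not supply an $\mathcal{A}$--equivalence classification of round fold maps of $S^4$ into $\R^3$ in the form you need, so this step is not actually discharged. The paper instead argues directly: the page is $D^2$ with a single maximum, the identity component of its Morse-function-preserving diffeomorphism group has the homotopy type of $S^1$, and a nontrivial clutching element over the equator would make $\partial\bigl(f_i^{-1}(R\cup L)\bigr)$ an $S^1$--bundle over $S^2$ with nonzero Euler number, contradicting the fact that it must match $\partial(S^1\times D^3) \cong S^2\times S^1$; hence the clutching element is trivial and uniqueness follows. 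For (3), you correctly identify the key step --- carrying the unordered pair of distinguished sections of $\rho_0$ to that of $\rho_1$ by a bundle isomorphism and reconciling the inner/outer ordering --- but you explicitly leave it as an ``obstacle'' rather than proving it, and this is precisely the content of the theorem in that case (note that the self-intersection numbers determine strictly more than the bundle class: both $\sigma=0$ and $\sigma=2$ give the trivial bundle but non-equivalent maps). The paper closes this by bypassing sections entirely: the map over $R\cup L$ is determined up to $\mathcal{R}$--equivalence by the clutching element $\sigma_i$ in $\pi_1$ of the identity component of $\mathrm{Diff}(S^1)$, identified with the self-intersection number of $S_i^+$; the hypothesis gives $\sigma_0=\pm\sigma_1$, the case $\sigma_0=\sigma_1$ is immediate, and the case $\sigma_0=-\sigma_1$ is reduced to it by the orientation-reversing diffeomorphism of $S^2$ preserving the page Morse function, which induces an $f_0$--preserving orientation-reversing self-diffeomorphism of $M$ negating $\sigma_0$. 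You would need to supply arguments of this kind (or genuinely establish the transitivity statement about pairs of sections) for the proof to be complete.
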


\begin{rem}
In Theorem~\ref{thm:classification2} (3), each of $S(f_i)$,
$i = 0, 1$, consists exactly of
two components and their self-intersection numbers are
of the form $k_i, -k_i$ for some integer $k_i$.
Note that $M$ is diffeomorphic to $S^2 \times S^2$ if the self-intersection
numbers are even and to $S^2 \tilde{\times} S^2$ if the
self-intersection numbers are odd.
\end{rem}

\begin{proof}[Proof of Theorem~\textup{\ref{thm:classification2}}]
Necessity is clear.

As to sufficiency, when both of $f_0$ and $f_1$
have indefinite fold points, the proof is the same
as that of Theorem~\ref{thm:classification}. 
This is because the space of the Morse functions
preserving the page Morse function is contractible
in this case \cite{Mak1, Mak2}.

When both of $f_0$ and $f_1$ have only definite
fold as their singularities, then as the
page function, we have the following two possibilities:
one is the Morse function on $D^2$ with exactly one
critical point, which is the maximum point, and the
other is the Morse function on $S^2$ with
exactly two critical points, which are
the minimum and the maximum points.

In the former case, the identity component 
of the group of diffeomorphisms of $D^2$ preserving the Morse
function has the homotopy type of $S^1$ \cite{Mak1, Mak2}.
However, if we use an attaching diffeomorphism for
$f_i^{-1}(R)$ and $f_i^{-1}(L)$ corresponding to a
non-trivial element of its fundamental group, then
the boundary $3$--dimensional manifold is not
diffeomorphic to $S^2 \times S^1$, which is a contradiction
as $f_i^{-1}(D)$ is a trivial circle bundle over $D$, $i = 0, 1$.
(Here, we use the decomposition of $\R^3$ as in (\ref{eq:decomposition}).)
Therefore, in this case, $f_0$ and $f_1$ are necessarily
$\mathcal{A}$--equivalent and $M$ is diffeomorphic to $S^4$.

In the latter case, $f_i^{-1}(D)$ is empty
and $S(f_i)$ consists exactly of two components,
$i = 0, 1$. Let $S_i^+$ (or $S_i^-$)
denote the component of $S(f_i)$ which is mapped
by $f_i$ to the outer (resp.\ inner) component 
of $f_i(S(f_i))$ in $\R^3$.
The attaching diffeomorphism for
$f_i^{-1}(R)$ and $f_i^{-1}(L)$ corresponds to
an element of the fundamental group of the identity component
of the group of diffeomorphisms
of the circle, which is isomorphic to the infinite
cyclic group $\Z$. Let $\sigma_i \in \Z$ denote
the corresponding element, $i = 0, 1$. Here, we may assume that
the self-intersection number of $S_i^+$ in $M$
coincides with $\sigma_i$. Then, the
self-intersection number of $S_i^-$ in $M$
are equal to $-\sigma_i$, $i = 0, 1$.
Recall that by our assumption, we have $\sigma_0 = \pm \sigma_1$.
If $\sigma_0 = \sigma_1$, then
we can easily construct an orientation preserving self-diffeomorphism
of $M$ that gives the $\mathcal{R}$--equivalence
between $f_0$ and $f_1$. If $\sigma_0 = -\sigma_1$, then
we consider an orientation reversing diffeomorphism
of $S^2$ that preserves the page Morse function for $f_0$, and this
gives rise to an orientation reversing self-diffeomorphism of $M$ that 
preserves $f_0$. Then, using this diffeomorphism, we may assume that
$\sigma_0 = \sigma_1$.
Therefore, in this case as well,
we conclude that $f_0$ and $f_1$ are $\mathcal{R}$--equivalent.
This completes the proof.
\end{proof}


It would be a difficult but interesting problem
to classify round fold maps on closed orientable
$3$--dimensional manifolds into $\R^2$ (refer to \cite{KS})
up to $\mathcal{A}$--equivalence.
Note that for simple stable maps of closed orientable
$3$--dimensional manifolds into $\R^2$, a classification
result has been given in \cite{Sa1}: however, the classification
is up to a certain equivalence strictly weaker than 
the $\mathcal{A}$--equivalence, and the result
is given in terms of links in the $3$--dimensional manifolds.

\section*{Acknowledgment}\label{ack}
The authors would like to thank Noriyuki Hamada
for stimulating discussions.
This work was supported by JSPS KAKENHI Grant Number 
JP17H06128. 



\end{document}